\documentclass[10pt, draft]{amsart}

\usepackage[cp1251]{inputenc}
\usepackage[T2A]{fontenc}
\usepackage{amsmath}
\usepackage{amsfonts}
\usepackage{amssymb}
\usepackage{amsthm}
\usepackage{euscript}
\usepackage[ukrainian,russian,english]{babel}
\textheight625pt \textwidth425pt \oddsidemargin0pt \evensidemargin0pt \topmargin5pt \headheight15pt \headsep15pt
\theoremstyle{plain}
\newtheorem{theorem}{Theorem}
\newtheorem{lemma}[theorem]{Lemma}

\newtheorem{corollary}{Corollary}[theorem]
\theoremstyle{definition}

\theoremstyle{remark}

\newtheorem{remark}{Remark}[theorem]
\newtheorem{example}{Example}
\theoremstyle{plain}
\newtheorem*{theorem*}{Theorem}
\newtheorem*{lemma*}{Lemma}
\newtheorem*{proposition*}{Proposition}
\newtheorem*{statement*}{Statement}
\newtheorem*{corollary*}{Corollary}
\theoremstyle{definition}
\newtheorem*{definition*}{Definition}
\theoremstyle{remark}
\newtheorem*{notation*}{Notation}
\newtheorem*{remark*}{Remark}
\newtheorem*{example*}{Example}
\tolerance=4000
\hoffset=20pt \voffset=5pt
\setcounter{example}{0}
\renewcommand{\theexample}%
{\Alph{example}}


\begin{document}

\title[Smoothness of Hill's potential and lengths of spectral gaps]{Smoothness of Hill's potential and lengths \\ of spectral gaps$^{\ast}$}

\address{Institute of Mathematics of NAS of Ukraine \\
         Tereshchenkivska str., 3 \\
         Kyiv-4 \\
         Ukraine \\
         01601}

\author[V.Mikhailets, V. Molyboga] {Vladimir Mikhailets, Volodymyr Molyboga}

\email[Vladimir Mikhailets]{mikhailets@imath.kiev.ua}

\email[Volodymyr Molyboga]{molyboga@imath.kiev.ua}

\thanks{$^{\ast}$The investigation is partially supported by DFFD of Ukraine under grant $28.1/017$.}

\keywords{Hill-Schr\"{o}dinger operators, spectral gaps, H\"{o}rmander spaces}

\subjclass[2010]{Primary 34L40; Secondary 47A10, 47A75}









\begin{abstract}
Let $\left\{\gamma_q(n)\right\}_{n \in \mathbb{N}}$ be the lengths of spectral gaps in a continuous spectrum of the Hill-Schr\"{o}dinger operators
\begin{equation*}
  S(q)u=-u''+q(x)u,\quad x\in \mathbb{R},
\end{equation*}
with 1-periodic real-valued potentials $q \in L^{2}\left(\mathbb{T}\right)$. Let weight function $\omega:\;[1,\infty)\rightarrow (0,\infty)$. We prove that under the
condition
\begin{equation*}
  \exists s\in [0,\infty):\quad k^{s}\ll\omega(k)\ll k^{s+1},\; k\in \mathbb{N},
\end{equation*}
the map $\gamma:\, q \mapsto \left\{\gamma_{q}(n)\right\}_{n \in \mathbb{N}}$ satisfies the equalities:
\begin{equation*}
  \verb"i")\quad  \gamma\left(H^{\omega} \right) = h_{+}^{\omega},\hspace{30pt}
  \verb"ii")\quad  \gamma^{-1}\left(h_{+}^{\omega}\right) = H^{\omega},
\end{equation*}
where the real function space
\begin{align*}
  H^{\omega} & =\left\{f=\sum_{k\in \mathbb{Z}}\widehat{f}\,(k)e^{i k2\pi x}\in L^{2}\left(\mathbb{T}\right)\left|\;
  \sum_{k\in \mathbb{N}}  \omega^{2}(k)|\widehat{f}(k)|^{2}<\infty,\; \widehat{f}(k)=\overline{\widehat{f}(-k)},\;k\in \mathbb{Z}\right.\right\},
\end{align*}
and
\begin{equation*}
  h^{\omega} = \left\{a=\{a(k)\}_{k\in \mathbb{N}}\left|\sum_{k\in \mathbb{N}}\omega^{2}(k)|a(k)|^{2}<\infty\right.\right\},  \,\,
  h_{+}^{\omega} = \left\{a=\{a(k)\}_{k\in \mathbb{N}}\in h^{\omega}\left|\; a(k)\geq 0\right.\right\}.
\end{equation*}
If the weight $\omega$ is such that
\begin{equation*}
  \exists a>1,c>1:\qquad c^{-1}\leq \frac{\omega(\lambda t)}{\omega (t)}\leq c\quad\forall t\geq 1,\;\lambda\in [1,a]
\end{equation*}
then the function class $H^{\omega}$ is a real H\"{o}rmander space $H_{2}^{\omega}(\mathbb{T},\mathbb{R})$ with the weight $\omega(\sqrt{1+\xi^{2}})$.
\end{abstract}

\maketitle

\section{Introduction}\label{sct_Int}
Let consider on the complex Hilbert space $L^{2}(\mathbb{R})$ the Hill-Schr\"{o}dinger operators
\begin{equation}\label{eq_10}
  S(q)u:=-u''+q(x)u,\quad x\in \mathbb{R},
\end{equation}
with 1-periodic real-valued potentials
\begin{equation*}
  q(x)=\sum_{k\in \mathbb{Z}}\widehat{q}(k)e^{i k 2\pi x}\in L^{2}(\mathbb{T},\mathbb{R}),\quad
  \mathbb{T}:=\mathbb{R}/\mathbb{Z}.
\end{equation*}
Last condition means that
\begin{equation*}\label{eq_11}
 \sum_{k\in \mathbb{Z}}|\widehat{q}(k)|^{2}<\infty
 \quad\text{and}\quad \widehat{q}(k)=\overline{\widehat{q}(-k)},\quad k\in \mathbb{Z}.
\end{equation*}

It is well known that the operators $S(q)$ are lower semibounded and self-adjoint. Their spectra are absolutely continuous and have a zone
structure \cite{ReSi}.

Spectra of the operators $S(q)$ are completely defined by the location of the endpoints of spectral gaps
$\{\lambda_{0}(q),\lambda_{n}^{\pm}(q)\}_{n=1}^{\infty}$, which satisfy the inequalities:
\begin{equation}\label{InEq}
  -\infty<\lambda_{0}(q)<\lambda_{1}^{-}(q)\leq\lambda_{1}^{+}(q)<\lambda_{2}^{-}(q)\leq\lambda_{2}^{+}(q)<\cdots\,.
\end{equation}
For even/odd numbers $n\in \mathbb{Z}_{+}$ the endpoints of spectral gaps $\{\lambda_{0}(q),\lambda_{n}^{\pm}(q)\}_{n=1}^{\infty}$ are eigenvalues
of the periodic/semiperiodic problems on the interval $[0,1]$:
\begin{align*}
   S_{\pm}(q)u & :=-u''+q(x)u=\lambda u, \\
   \mathrm{Dom}(S_{\pm}(q)) & :=\left\{u\in H^{2}[0,1]\left|\, u^{(j)}(0)=\pm\, u^{(j)}(1),\, j=0,1\right.\right\}.\hspace{130pt}
\end{align*}

Interiors of spectral bands (stability or tied zones)
\begin{equation*}
  \mathcal{B}_{0}(q):=(\lambda_{0}(q),\lambda_{1}^{-}(q)),\qquad
  \mathcal{B}_{n}(q):=(\lambda_{n}^{+}(q),\lambda_{n+1}^{-}(q)),\quad n\in
  \mathbb{N},
\end{equation*}
together with the \textit{collapsed} gaps
\begin{equation*}
 \lambda=\lambda_{n}^{+}=\lambda_{n}^{-}, \quad n\in \mathbb{N}
\end{equation*}
are characterized as a locus of those real $\lambda\in \mathbb{R}$ for which all solutions of the equation $S(q) u=\lambda u$ are bounded.
\textit{Open} spectral gaps (instability or forbidden zones)
\begin{equation*}
  \mathcal{G}_{0}(q):=(-\infty,\lambda_{0}(q)),\qquad
  \mathcal{G}_{n}(q):=(\lambda_{n}^{-}(q),\lambda_{n}^{+}(q))\neq\emptyset,\quad n\in
  \mathbb{N}
\end{equation*}
are a locus of those real $\lambda\in \mathbb{R}$ for which any nontrivial solution of the equation $S(q) u=\lambda u$ is unbounded.

We study the behaviour of the lengths of spectral gaps
\begin{equation*}
  \gamma_{q}(n):=\lambda_{n}^{+}(q)-\lambda_{n}^{-}(q),\quad n\in \mathbb{N}
\end{equation*}
of the Hill-Schr\"{o}dinger operators $S(q)$ in terms of the behaviour of the Fourier coefficients $\{\widehat{q}(n)\}_{n\in \mathbb{N}}$ of the
potentials $q$ with respect to appropriate weight spaces, that is in terms of potential regularity.

Hochstadt \cite{Hchs1, Hchs2}, Marchenko and Ostrovskii \cite{MrOs}, McKean and Trubowitz \cite{McKTr} proved that the potential $q(x)$ is an infinitely differentiable
function if and only if the lengths of spectral gaps $\{\gamma_{q}(n)\}_{n=1}^{\infty}$ decrease faster than arbitrary power of $1/n$:
\begin{equation*}
  q(x)\in C^{\infty}(\mathbb{T},\mathbb{R})\Leftrightarrow
  \gamma_{q}(n)=O(n^{-k}),\; n\rightarrow\infty\quad \forall  k\in \mathbb{Z}_{+}.
\end{equation*}

Marchenko and Ostrovskii \cite{MrOs} (see also \cite{Mrch}) discovered that:
\begin{equation}\label{eq_14}
  q\in H^{s}(\mathbb{T},\mathbb{R})\Leftrightarrow\sum_{n\in \mathbb{N}}(1+2n)^{2s}\gamma_{q}^{2}(n),\qquad s\in
  \mathbb{Z}_{+},
\end{equation}
where $H^{s}(\mathbb{T},\mathbb{R})$, $s\in \mathbb{Z}_{+}$, denotes the Sobolev space of 1-periodic real-valued functions on the circle~$\mathbb{T}$.

To characterize regularity of potentials in the finer way we shall use the real H\"{o}rmander spaces $ H^{\omega}(\mathbb{T},\mathbb{R})$ where $\omega(\cdot)$ is a positive weight (see Appendix).
In the case of the Sobolev spaces it is a power one.

Djakov, Mityagin \cite{DjMt2}, P\"{o}schel \cite{Psch} extended the Marchenko-Ostrovskii Theorem \eqref{eq_14} to the general class of
weights $\Omega=\{\Omega(k)\}_{k\in \mathbb{N}}$ satisfying the following conditions:
\begin{align*}
 \verb"i")\hspace{5pt} & \Omega(k)\nearrow\infty,\; k\in \mathbb{N};\hspace{5pt}\text{(monotonicity)}  \hspace{220pt} \\
\verb"ii")\hspace{5pt} & \Omega(k+m)\leq \Omega(k)\Omega(m)\quad k,m\in \mathbb{N};\hspace{5pt}\text{(submultiplicity)} \\
\verb"iii")\hspace{5pt} & \frac{\log\Omega(k)}{k}\searrow 0,\quad k\rightarrow\infty,\hspace{5pt}\text{(subexponentiality)}.
\end{align*}
For such weights they proved that
\begin{equation}\label{eq_16}
  q\in H^{\Omega}(\mathbb{T},\mathbb{R})\Leftrightarrow \{\gamma_{q}(\cdot)\}\in  h^{\Omega}(\mathbb{N}).
\end{equation}
Here $h^{\Omega}(\mathbb{N})$ is the Hilbert space of weighted sequences generated by the weight $\Omega(\cdot)$.

Earlier Kappeler, Mityagin \cite{KpMt2} proved the direct implication in \eqref{eq_16} under the only assumption of submultiplicity. In the special cases of the
Abel-Sobolev weights, the Gevrey weights and the slowly increasing weights the relationship \eqref{eq_16} was established by Kappeler, Mityagin \cite{KpMt1}
($\Rightarrow$) and Djakov, Mityagin \cite{DjMt, DjMt1} ($\Leftarrow$). Detailed exposition of these results is given in the survey \cite{DjMt2}. It should be noted that
Kappeler, Mityagin \cite{KpMt1, KpMt2}, Djakov, Mityagin \cite{DjMt1, DjMt2} and P\"{o}schel \cite{Psch} studied also the more general case of complex-valued
potentials.

\section{Main result}\label{sct_MnRs}
The main purpose of this paper is to prove the following result.
\begin{theorem}\label{th_10}
Let $q\in L^{2}(\mathbb{T},\mathbb{R})$ and the weight $\omega=\{\omega(k)\}_{k\in\mathbb{N}}$ satisfy conditions:
\begin{equation*}\label{eq_21}
  k^{s}\ll \omega(k)\ll k^{1+s},\qquad s\in [0,\infty).
\end{equation*}
Then the map $\gamma:\, q \mapsto \left\{\gamma_{q}(n)\right\}_{n \in \mathbb{N}}$ satisfies the equalities:
\begin{align*}
  \verb"i")\quad & \gamma\left(H^{\omega}(\mathbb{T},\mathbb{R})\right) = h_{+}^{\omega}(\mathbb{N}),\hspace{275pt} \\
  \verb"ii")\quad & \gamma^{-1}\left(h_{+}^{\omega}(\mathbb{N})\right) = H^{\omega}(\mathbb{T},\mathbb{R}).
\end{align*}
\end{theorem}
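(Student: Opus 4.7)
The plan is to reduce the assertion to the Djakov--Mityagin--P\"{o}schel equivalence~\eqref{eq_16} applied to the two power weights $k^{s}$ and $k^{s+1}$, which both lie in the DMP class (monotone, submultiplicative and subexponential). The hypothesis $k^{s}\ll\omega(k)\ll k^{s+1}$ yields the continuous embeddings
\begin{equation*}
 H^{s+1}(\mathbb{T},\mathbb{R})\subset H^{\omega}(\mathbb{T},\mathbb{R})\subset H^{s}(\mathbb{T},\mathbb{R}),\qquad
 h^{k^{s+1}}(\mathbb{N})\subset h^{\omega}(\mathbb{N})\subset h^{k^{s}}(\mathbb{N}),
\end{equation*}
so the weight $\omega$ is sandwiched between two weights for which~\eqref{eq_16} is already settled.

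The core analytical input is a quantitative sharpening of the Marchenko--Ostrovskii relation, asserting that the defect
\begin{equation*}
 \delta_{q}(n):=\gamma_{q}(n)-2|\widehat{q}(n)|,\qquad n\in\mathbb{N},
\end{equation*}
gains one order of regularity over $\widehat{q}$: whenever $q\in H^{s}(\mathbb{T},\mathbb{R})$, the sequence $\{\delta_{q}(n)\}$ lies in the finer space $h^{k^{s+1}}(\mathbb{N})$, and therefore in $h^{\omega}(\mathbb{N})$. This is where genuinely new analysis beyond the DMP framework is required; it is extracted from a second-order perturbation expansion of the monodromy matrix of $-u''+qu=\lambda u$ near a gap endpoint, in which the linear contribution reproduces $2|\widehat{q}(n)|$ while the quadratic remainder is a convolution in $\widehat{q}$ and so picks up one extra derivative.

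Granted this refinement, both inclusions in~ii) follow by a subtraction argument. \emph{Forward:} If $q\in H^{\omega}$, then $q\in H^{s}$ by the embedding, so $\{\delta_{q}(n)\}\in h^{\omega}$ by the refinement, while $\{2|\widehat{q}(n)|\}\in h^{\omega}$ directly from the definition of $H^{\omega}$; adding, $\{\gamma_{q}(n)\}\in h_{+}^{\omega}$. \emph{Reverse:} If $\{\gamma_{q}(n)\}\in h_{+}^{\omega}\subset h^{k^{s}}$, then the DMP theorem with $\Omega=k^{s}$ gives $q\in H^{s}$, the refinement again produces $\{\delta_{q}(n)\}\in h^{\omega}$, and subtracting leaves $\{2|\widehat{q}(n)|\}=\{\gamma_{q}(n)\}-\{\delta_{q}(n)\}\in h^{\omega}$, i.e., $q\in H^{\omega}$.

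For~i) the inclusion $\gamma(H^{\omega})\subseteq h_{+}^{\omega}$ is contained in~ii); the opposite inclusion requires the surjectivity of $\gamma$ onto $h_{+}^{\omega}$. By the classical inverse spectral theory of Hill's operator (Marchenko, Garnett--Trubowitz, Kappeler--P\"{o}schel) every admissible sequence $a\in h_{+}^{\omega}$ is realized as $\gamma_{q}$ for some $q\in L^{2}(\mathbb{T},\mathbb{R})$; assertion~ii), just established, then promotes this $q$ to $H^{\omega}$. The principal obstacle throughout is the refined estimate on $\delta_{q}$; the rest of the argument is a bookkeeping exercise in nested weighted $\ell^{2}$-spaces.
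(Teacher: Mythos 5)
Your proposal follows essentially the same route as the paper: the same key refinement (the defect $\gamma_{q}(n)-2|\widehat{q}(n)|$ gains one order, obtained from the second-order eigenvalue expansion whose quadratic term is a convolution in $\widehat{q}$, exactly the paper's Lemma~\ref{lm_12} via Kappeler--Mityagin and the Convolution Lemma), the same sandwiching of $\omega$ between $k^{s}$ and $k^{s+1}$ with the subtraction argument for both directions of ii), and the same appeal to Garnett--Trubowitz surjectivity onto $h_{+}^{0}$ upgraded by ii) to give i). The argument is correct and matches the paper's structure.
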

Theorem \ref{th_10} immediately derives following corollary.
\begin{corollary}\label{cr_10}
Let for the weight $\omega=\{\omega(k)\}_{k\in\mathbb{N}}$ exist the order
\begin{equation*}
  \lim_{k\rightarrow\infty}\frac{\log\omega(k)}{\log k}=s\in [0,\infty),
\end{equation*}
and let for $s = 0$ the values of the weight $\omega=\{\omega(k)\}_{k\in\mathbb{N}}$ be separated from zero.
Then
\begin{equation*}
  q\in H^{\omega}(\mathbb{T},\mathbb{R})\Leftrightarrow \{\gamma_{q}(\cdot)\}\in  h^{\omega}(\mathbb{N}).
\end{equation*}
\end{corollary}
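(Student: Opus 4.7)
The plan is to deduce the corollary directly from Theorem \ref{th_10}. The starting observation is that $\gamma_{q}(n)\ge 0$ by \eqref{InEq}, so $\{\gamma_{q}(n)\}\in h^{\omega}(\mathbb{N})$ is equivalent to $\{\gamma_{q}(n)\}\in h_{+}^{\omega}(\mathbb{N})$. Consequently, the forward implication of the desired equivalence reduces to $\gamma(H^{\omega})\subseteq h_{+}^{\omega}$ from part (i) of Theorem \ref{th_10}, and the converse to $\gamma^{-1}(h_{+}^{\omega})\subseteq H^{\omega}$ from part (ii). All that remains is to verify that the theorem's hypothesis, namely $k^{s'}\ll\omega(k)\ll k^{s'+1}$ for some $s'\in[0,\infty)$, is a consequence of the corollary's assumptions.

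The existence of the order, $\log\omega(k)/\log k\to s$, produces for every $\varepsilon>0$ a bound $k^{s-\varepsilon}\le\omega(k)\le k^{s+\varepsilon}$ valid for all sufficiently large $k$. I would split into two cases. When $s>0$, I would pick $s'$ strictly between $\max(0,s-1)$ and $s$ (for instance $s':=s-\tfrac{1}{2}\min(s,\tfrac{1}{2})$), so that $s'\in[0,\infty)$ and both $s-\varepsilon\ge s'$ and $s+\varepsilon\le s'+1$ hold for a suitably small $\varepsilon>0$; the two-sided polynomial bound then follows at once from the order estimate. When $s=0$, I would take $s':=0$: the upper bound $\omega(k)\le k^{1/2}\le k$ follows from the order estimate with $\varepsilon=1/2$, whereas the lower bound $1\le C\,\omega(k)$ is precisely the separation-from-zero hypothesis built into the statement.

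The main (and only real) point of the argument is this last observation: a weight of order zero need not be bounded below (as the example $\omega(k)=1/\log k$ shows), so without the separation-from-zero assumption the lower bound in the theorem's hypothesis would fail, and the corollary would be out of reach by this route. Once the two cases above are settled, Theorem \ref{th_10} applies and the equivalence of the corollary is obtained by assembling parts (i) and (ii) as described in the first paragraph. No deeper obstacle is expected; this is indeed an immediate consequence of the theorem.
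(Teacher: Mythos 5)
Your argument is correct and is exactly the route the paper intends: the paper offers no written proof, merely asserting that Corollary \ref{cr_10} follows immediately from Theorem \ref{th_10}, and your verification that the order condition (plus separation from zero when $s=0$) yields $k^{s'}\ll\omega(k)\ll k^{s'+1}$ for a suitable $s'\in[0,\infty)$ is the step being left implicit. Your side remark that $\gamma_{q}(n)\geq 0$ reconciles $h^{\omega}$ with $h_{+}^{\omega}$, and your observation on why the separation-from-zero hypothesis is genuinely needed at $s=0$, are both apt.
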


From Corollary \ref{cr_10} we receive the following result.
\begin{corollary}[\cite{MiMl}]\label{cr_12}
Let the weight  $\omega=\{\omega(k)\}_{k\in\mathbb{N}}$ be a regular varying sequence in the Karamata sense with the index $s\in [0,\infty)$, and
let for $s = 0$ its values be separated from zero.
Then
\begin{equation*}
  q\in H^{\omega}(\mathbb{T},\mathbb{R})\Leftrightarrow \{\gamma_{q}(\cdot)\}\in  h^{\omega}(\mathbb{N}).
\end{equation*}
\end{corollary}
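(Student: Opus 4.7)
The plan is to derive Corollary \ref{cr_12} from Corollary \ref{cr_10} by showing that a Karamata regularly varying sequence automatically satisfies the hypotheses imposed there, namely the existence of an order plus (in the case $s=0$) separation from zero. In this way the work of proving the spectral characterization is already done by Theorem \ref{th_10} and Corollary \ref{cr_10}; only verification of the weight conditions remains.

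Recall that $\omega=\{\omega(k)\}_{k\in\mathbb{N}}$ being regularly varying of index $s$ in the Karamata sense means that one can write $\omega(k)=k^{s}L(k)$, where $L$ is a slowly varying sequence, i.e.\ $L(\lfloor\lambda k\rfloor)/L(k)\to 1$ as $k\to\infty$ for every fixed $\lambda>0$. The key step is to show
\[
\lim_{k\to\infty}\frac{\log\omega(k)}{\log k}=s.
\]
Since $\log\omega(k)/\log k=s+\log L(k)/\log k$, it suffices to check that $\log L(k)/\log k\to 0$ for any slowly varying $L$. I would obtain this from the standard Potter-type bounds: for every $\varepsilon>0$ there exists $k_{0}$ with $k^{-\varepsilon}\le L(k)\le k^{\varepsilon}$ whenever $k\ge k_{0}$. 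These bounds follow from Karamata's integral representation of slowly varying functions (upon extending $L$ continuously to $[1,\infty)$, for example by piecewise-linear interpolation, which preserves slow variation). Taking logarithms and dividing by $\log k$ yields the claimed limit, so the order hypothesis of Corollary \ref{cr_10} is verified.

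In the case $s=0$ the function $\omega=L$ is merely slowly varying and a priori might oscillate down to zero; the extra assumption that $\omega$ is separated from zero is included directly in the hypotheses of Corollary \ref{cr_12}, so no further work is needed there. With both conditions of Corollary \ref{cr_10} established, the desired equivalence
\[
q\in H^{\omega}(\mathbb{T},\mathbb{R})\;\Longleftrightarrow\;\{\gamma_{q}(\cdot)\}\in h^{\omega}(\mathbb{N})
\]
follows immediately. The only nontrivial input is the slowly-varying lemma $\log L(k)/\log k\to 0$; once Karamata's theory is invoked, the argument reduces to matching definitions, which is why the authors present Corollary \ref{cr_12} as an immediate consequence of Corollary \ref{cr_10}.
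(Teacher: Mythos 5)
Your proposal is correct and follows exactly the paper's route: the paper obtains Corollary \ref{cr_12} directly from Corollary \ref{cr_10}, the only point being that a Karamata regularly varying sequence of index $s$ has order $s$ in the sense $\lim_{k\to\infty}\log\omega(k)/\log k=s$, which you justify by the standard Potter-type bounds for the slowly varying factor (the paper leaves this to the cited reference \cite{BnGlTg}). Your verification, including the interpolation remark needed to pass from sequences to functions, fills in the details the paper omits but introduces no new method.
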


Note that the assumption of Corollary \ref{cr_12} holds, for instance, for the weight
\begin{align*}
  \omega(k) & =(1+2k)^{s}\,(\log (1+k))^{r_{1}}(\log\log (1+k))^{r_{2}}\ldots (\log\log\ldots\log (1+k))^{r_{p}}, \\
    s & \in(0,\infty),\;\{r_{1},\ldots,r_{p}\}\subset \mathbb{R},\;p\in \mathbb{N},\hspace{210pt}
\end{align*}
see \cite{BnGlTg}.

The following Example \ref{ex_10} shows that statement \eqref{eq_16} does not cover Corollary \ref{cr_10} and
moreover Theorem \ref{th_10}.
\begin{example}\label{ex_10}
Let $s\in [0,\infty)$. Set
\begin{equation*}
  w(k):=
  \begin{cases}
    k^{s}\log(1+k) & \text{if}\quad k\in 2\mathbb{N}, \\
    k^{s} & \text{if}\quad k\in (2\mathbb{N}-1).
  \end{cases}
\end{equation*}
Then the weight $\omega=\{\omega(k)\}_{k\in \mathbb{N}}$ satisfies the conditions of Corollary \ref{cr_10}. But one can prove that it is not equivalent to any monotonic
weight.
\end{example}

\begin{remark} Theorem \ref{th_10} shows that if the sequence $\{|\widehat{q}(n_k)|\}_{k=1}^{\infty}$
decreases particularly fast on a certain subsequence $\{n_{k}\}_{k = 1}^{\infty} \subset \mathbb{N}$,
then so does sequence $\{\gamma_q(n_k)\}_{k=1}^{\infty}$ on the same subsequence.
Inverse statement is also true.
\end{remark}

%
\section{Preliminaries}\label{sct_Prl}
Here, for convenience, we define Hilbert spaces of weighted two-sided sequences and formulate the Convolution Lemma \ref{lm_10}.

For every positive sequence $\omega=\{\omega(k)\}_{k\in\mathbb{N}}$ there exists its unique extension on $\mathbb{Z}$ which is a two-sided
sequence satisfying the conditions:
\begin{align*}
  \verb"i")\hspace{5pt} & \omega(0)=1;\hspace{340pt} \\
  \verb"ii")\hspace{5pt} & \omega(-k)=\omega(k)\quad \forall k\in \mathbb{N}; \\
  \verb"iii")\hspace{5pt} & \omega(k)>0\quad \forall k\in \mathbb{Z}.
\end{align*}

Let $h^{\omega}(\mathbb{Z})\equiv h^{\omega}(\mathbb{Z},\mathbb{C})$ be the Hilbert space of two-sided sequences:
\begin{align*}
  h^{\omega}(\mathbb{Z}) & :=\left\{a=\{a(k)\}_{k\in \mathbb{Z}}\left|
 \sum_{k\in \mathbb{Z}}\omega^{2}(k)|a(k)|^{2}<\infty\right.\right\}, \\
  (a,b)_{h^{\omega}(\mathbb{Z})} & :=\sum_{k\in \mathbb{Z}}\omega^{2}(k)a(k)\overline{b(k)},\quad
  a,b\in h^{\omega}(\mathbb{Z}), \\
  \|a\|_{h^{\omega}(\mathbb{Z})} & :=(a,a)_{h^{\omega}(\mathbb{Z})}^{1/2},\quad
  a\in h^{\omega}(\mathbb{Z}).
\end{align*}
By $h^{\omega}(n)$ for a convenience we will denote the $n$th element of a sequence $a=\{a(k)\}_{k\in \mathbb{Z}}$ in $h^{\omega}(\mathbb{Z})$.

Basic weights which we use are the power ones:
\begin{equation*}
 w_{s}=\left\{w_{s}(k)\right\}_{k\in \mathbb{Z}}:\qquad w_{s}(k)=(1+2|k|)^{s},\quad s\in
 \mathbb{R}.
\end{equation*}
In this case it is convenient to use shorter notations:
\begin{equation*}
   h^{\omega_{s}}(\mathbb{Z})\equiv h^{s}(\mathbb{Z}),\quad s\in \mathbb{R}.
\end{equation*}

Operation of convolution for two-sided sequences
\begin{equation*}
  a=\{a(k)\}_{k\in\mathbb{Z}}\quad\text{and}\quad b=\{b(k)\}_{k\in\mathbb{Z}}
\end{equation*}
is formally defined as follows:
\begin{align*}
  (a,b) & \mapsto a\ast b, \\
  (a\ast b)(k) & :=\sum_{j\in \mathbb{Z}}a(k-j)\,b(j),\quad k\in \mathbb{Z}.
\end{align*}

Sufficient conditions for the convolution to exist as a continuous map are given by the following known lemma, see for example \cite{KpMh, Mhr}.
\begin{lemma}[The Convolution Lemma]\label{lm_10}
Let $s,r\geq 0$, and $t\leq\min(s,r)$, $t\in \mathbb{R}$. If $s+r-t>1/2$ then the convolution $(a,b)\mapsto a\ast b$ is well defined as a
continuous map acting in the spaces:
\begin{align*}
  (a)\hspace{5pt} & h^{s}(\mathbb{Z})\times h^{r}(\mathbb{Z})\rightarrow h^{t}(\mathbb{Z}),\hspace{280pt} \\
  (b)\hspace{5pt} & h^{-t}(\mathbb{Z})\times h^{s}(\mathbb{Z})\rightarrow h^{-r}(\mathbb{Z}).
\end{align*}

In the case $s+r-t<1/2$ this statement fails to hold.
\end{lemma}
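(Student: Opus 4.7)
My plan is to prove part (a) directly by a Cauchy--Schwarz / Schur-type estimate, then deduce (b) from (a) by duality, and finally handle the sharpness claim via an explicit power-law counterexample.

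For (a), I would factor each term of the convolution as
\begin{equation*}
w_t(k)\,a(k-j)\,b(j) \;=\; \bigl[w_s(k-j)\,a(k-j)\bigr]\bigl[w_r(j)\,b(j)\bigr]\cdot\frac{w_t(k)}{w_s(k-j)\,w_r(j)},
\end{equation*}
and apply Cauchy--Schwarz in $j$, grouping the first two brackets into a factor which, after squaring, summing over $k$, and invoking Fubini, produces exactly $\|a\|_{h^{s}}^{2}\|b\|_{h^{r}}^{2}$. The continuity estimate then reduces to the Schur-type uniform bound
\begin{equation*}
M \;:=\; \sup_{k\in\mathbb{Z}}\,\sum_{j\in\mathbb{Z}}\frac{w_t(k)^{2}}{w_s(k-j)^{2}\,w_r(j)^{2}} \;<\; \infty.
\end{equation*}
To verify $M<\infty$ I split $\mathbb{Z}$ into the regions $|j|\le|k|/2$, $|k-j|\le|k|/2$, and the remaining ``bulk.'' In the first two regions one weight factor pulls out as $w_{\min(s,r)}(k)^{-2}$ and is absorbed by $w_t(k)^{2}$ because $t\le\min(s,r)$; the bulk is controlled by the beta-type asymptotic $\sum_{|k|/2 < j < k}(1+|k-j|)^{-2s}(1+|j|)^{-2r}\asymp(1+|k|)^{1-2s-2r}$, and after multiplication by $w_t(k)^{2}$ it stays bounded exactly under the strict hypothesis $s+r-t>1/2$.

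Part (b) follows from (a) by duality. Using that $h^{-\sigma}(\mathbb{Z})$ is the antidual of $h^{\sigma}(\mathbb{Z})$ under the pairing $\langle x,y\rangle:=\sum_{k}x(k)\overline{y(k)}$, and the standard identity $\langle a\ast b,c\rangle = \langle a,\,b^{\sharp}\ast c\rangle$ with $b^{\sharp}(k):=\overline{b(-k)}$ (obtained by the substitution $m=k-j$, together with $\|b^{\sharp}\|_{h^{s}}=\|b\|_{h^{s}}$ coming from evenness of the weight), I reduce to bounding $\|b^{\sharp}\ast c\|_{h^{t}}$. But this is exactly part (a) applied to $b^{\sharp}\in h^{s}$ and $c\in h^{r}$, and combining with $|\langle a,\,\cdot\,\rangle|\le\|a\|_{h^{-t}}\|\cdot\|_{h^{t}}$ and taking the supremum over $\|c\|_{h^{r}}\le 1$ yields $\|a\ast b\|_{h^{-r}}\le C\,\|a\|_{h^{-t}}\|b\|_{h^{s}}$.

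For the failure claim when $s+r-t<1/2$, I would produce an explicit power-law counterexample: set $a(k)=b(k)=(1+|k|)^{-\alpha}$ with $\alpha$ chosen to place $a$ just inside $h^{s}\cap h^{r}$, and verify by a direct beta-function computation that $(a\ast b)(k)\asymp(1+|k|)^{1-2\alpha}$ lies outside $h^{t}$ for a suitable choice of $\alpha$. The main obstacle is the case analysis for $M(k)$: one must trace through the regimes $s,r<1/2$, $s,r>1/2$, and the mixed cases (with logarithmic corrections at the borderlines $s=1/2$ or $r=1/2$) to confirm that the strict hypothesis $s+r-t>1/2$, not merely $\ge 1/2$, is what makes the bulk sum uniformly bounded; by comparison, the duality argument for (b) and the sharpness counterexample are essentially routine.
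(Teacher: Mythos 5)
The paper itself offers no proof of this lemma: it is quoted from the literature (Kappeler--M\"ohr and M\"ohr's thesis), so your self-contained Cauchy--Schwarz/Schur argument plus duality is not deviating from anything in the text --- it is the standard proof, and the duality reduction of (b) to (a) via $\langle a\ast b,c\rangle=\langle a,b^{\sharp}\ast c\rangle$ is correct. For the only parameters the paper actually uses ($s=r=t=1+\sigma$, $\sigma\ge 0$, in Lemma \ref{lm_12}), your argument is complete.

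Measured against the full stated hypotheses, however, there are two genuine gaps. First, your Schur constant $M$ is infinite whenever $s+r\le 1/2$: the inner sum $\sum_{j}w_{s}(k-j)^{-2}w_{r}(j)^{-2}$ already diverges for every fixed $k$ (its tail behaves like $\sum_{m}m^{-2s-2r}$ with $2s+2r\le 1$), so no prefactor $w_{t}(k)^{2}$ can save it, and the single Cauchy--Schwarz with your factorization produces $\infty\cdot\|a\|_{h^{s}}\|b\|_{h^{r}}$. This regime is allowed by the hypotheses --- e.g.\ $s=r=1/5$, $t=-1/5$ gives $t\le\min(s,r)$ and $s+r-t=3/5>1/2$ --- and the conclusion is still true there, but it requires a different device (e.g.\ passing to the symmetric trilinear form $\sum_{i+j+l=0}$ and splitting according to which frequency is smallest, or a dyadic decomposition); your ``bulk'' asymptotic $\asymp(1+|k|)^{1-2s-2r}$ is only valid when $s+r>1/2$. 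Second, in the sharpness part the symmetric choice $a=b=(1+|k|)^{-\alpha}$ with $\alpha$ just above $\max(s,r)+1/2$ does not always produce a counterexample when $s\ne r$: for $s=2/5$, $r=0$, $t=-1/20$ (so $s+r-t=9/20<1/2$) one finds $(a\ast a)(k)\asymp(1+|k|)^{-4/5-2\epsilon}$, which \emph{does} lie in $h^{t}$. You need asymmetric exponents $\alpha=s+1/2+\epsilon$, $\beta=r+1/2+\epsilon$; then (since $s+r-t<1/2$ together with $t\le\min(s,r)$ forces $s,r<1/2$, hence $\alpha,\beta<1$) one gets $(a\ast b)(k)\asymp(1+|k|)^{1-\alpha-\beta}$ with $\alpha+\beta-1=s+r+2\epsilon<t+1/2$ for small $\epsilon$, which indeed falls outside $h^{t}$.
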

\section{The Proofs}\label{sct_Prf}
Basic point of our proof of Theorem \ref{th_10} is sharp asymptotic formulae for the lengths of spectral gaps $\{\gamma_{q}(n)\}_{n\in\mathbb{N}}$ of the
Hill-Schr\"{o}dinger operators $S(q)$ and fundamental result of \cite[Theorem 1]{GrTr}.
\begin{lemma}\label{lm_12}
The lengths of spectral gaps $\{\gamma_{q}(n)\}_{n\in\mathbb{N}}$ of the Hill-Schr\"{o}dinger operators $S(q)$ with $q\in
H^{s}(\mathbb{T},\mathbb{R})$, $s\in [0,\infty)$, uniformly on the bounded sets of potentials $q$ in the corresponding Sobolev spaces
$H^{s}(\mathbb{T})$ for $n\geq n_{0}$, $n_{0}=n_{0}\left(\|q\|_{H^{s}(\mathbb{T})}\right)$, satisfy the following asymptotic formulae:
\begin{equation}\label{eq_30}
  \gamma_{q}(n)=2|\widehat{q}(n)|+h^{1+s}(n).
\end{equation}
\end{lemma}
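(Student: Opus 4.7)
The plan is a Lyapunov--Schmidt reduction on the Fourier side followed by a Neumann expansion, with the error controlled through the Convolution Lemma. Passing to Fourier coefficients, the eigenvalue equation $-u'' + q u = \lambda u$ with periodic boundary conditions (for even $n$) or antiperiodic boundary conditions (for odd $n$) becomes an infinite linear system for the coefficients $c_k$ of $u$. For large $n$, both $\lambda_n^{\pm}(q)$ bifurcate from the doubly degenerate free eigenvalue $(n\pi)^2$ whose eigenspace is spanned by the two Fourier modes indexed by $\pm n$.

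First I would split the coefficient space into the two-dimensional resonant subspace $E_n$ and its orthogonal complement $E_n^{\perp}$. Since the distance from $(n\pi)^2$ to the next free eigenvalue is of order $n$, the operator $-\partial^2 + q - \lambda I$ restricted to $E_n^{\perp}$ is boundedly invertible for $\lambda$ in a neighborhood of $(n\pi)^2$, provided $n \ge n_0(\|q\|_{H^s})$. The Schur complement then yields an effective $2\times 2$ Hermitian eigenvalue problem $M_n(\lambda) c' = \lambda c'$ with
\begin{equation*}
  M_n(\lambda) = \begin{pmatrix} \alpha_n(\lambda) & \beta_n(\lambda) \\ \overline{\beta_n(\lambda)} & \alpha_n(\lambda) \end{pmatrix},
\end{equation*}
the equality of diagonal entries coming from the reflection symmetry $\widehat{q}(-k)=\overline{\widehat{q}(k)}$ (reality of $q$). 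The eigenvalues are $\alpha_n(\lambda)\pm|\beta_n(\lambda)|$, so solving the resulting scalar fixed-point equation in $\lambda$ gives $\gamma_q(n) = 2|\beta_n(\lambda_n^{\ast})|$ for some $\lambda_n^{\ast}\in[\lambda_n^-,\lambda_n^+]$.

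Second, I would Neumann-expand $\beta_n(\lambda)$ in powers of $q$ restricted to $E_n^{\perp}$. The zeroth term is the direct off-diagonal matrix element of $q$ in $E_n$, namely $\widehat{q}(n)$. Each higher-order term is a convolution of Fourier coefficients of $q$ divided by energy denominators $(n\pi)^2 - (k\pi)^2 \sim \pi^2 n\,|n-k|$ with $k\neq\pm n$, which contribute a decisive extra factor of $1/n$ relative to a plain convolution of $q$ with itself. Applying Lemma \ref{lm_10} to the convolutions and exploiting this $1/n$ gain produces $\beta_n - \widehat{q}(n) \in h^{s+1}(\mathbb{N})$, uniformly on bounded sets of $H^s(\mathbb{T})$, which together with the evaluation of $\beta_n$ at $\lambda_n^{\ast}$ yields \eqref{eq_30}.

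The main obstacle is extracting the full gain of one derivative for small $s$ (in particular $s=0$, where $q$ is only in $L^2$ and a crude $\ell^\infty$ estimate on $q\ast q$ is far too weak to place the error in $h^{1}$). One must retain the full weight $(n\pi)^2-(k\pi)^2$ inside the sums and apply the Convolution Lemma in its weighted form (part (b) with a judicious choice of $s,r,t$). A secondary technicality is ensuring that passing from $\beta_n(\lambda)$ to $\beta_n(\lambda_n^{\ast})$ preserves the bound; this follows from Lipschitz continuity of $\beta_n$ in $\lambda$ on the small interval $[\lambda_n^{-},\lambda_n^{+}]$ combined with the a priori localization $\lambda_n^{\pm} - (n\pi)^2 = O(1)$.
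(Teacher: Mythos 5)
Your outline is mathematically sound, but it is in essence a plan to reprove the result that the paper simply cites: the Lyapunov--Schmidt reduction to a $2\times 2$ block, the Neumann expansion of the off-diagonal entry $\beta_n(\lambda)$, and the uniform summation of that series in $h^{1+s}$ for $q$ in bounded sets of $H^{s}$ is exactly the content of Kappeler--Mityagin's Theorem~1.2 in [KpMt2]. The paper's own proof of Lemma~\ref{lm_12} takes that theorem as a black box: it yields
\begin{equation*}
  \sum_{n}(1+2n)^{2(1+s)}\Bigl(\min_{\pm}\bigl|\gamma_{q}(n)\pm 2\sqrt{(\widehat{q}+\varrho)(-n)(\widehat{q}+\varrho)(n)}\bigr|\Bigr)^{2}\leq C\bigl(\|q\|_{H^{s}}\bigr),
\end{equation*}
with the explicit quadratic correction $\varrho(n)=\pi^{-2}\sum_{j\neq\pm n}\widehat{q}(n-j)\widehat{q}(n+j)/((n-j)(n+j))$, and then the only work left is to show $\{\varrho(n)\}\in h^{1+s}(\mathbb{N})$. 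The paper does this by the clean factorization $\varrho(n)=\bigl(\{\widehat{q}(k)/k\}\ast\{\widehat{q}(k)/k\}\bigr)(2n)$ with each factor in $h^{1+s}(\mathbb{Z})$, so part (a) of the Convolution Lemma~\ref{lm_10} with $s=r=t=1+s$ applies directly (this is where the full gain of one derivative at $s=0$ comes from --- each energy denominator is split symmetrically between the two copies of $\widehat{q}$, rather than treated as a single factor of $1/n$ as in your sketch). So your route buys self-containedness at the cost of redoing a long and delicate argument, while the paper's route isolates the only new estimate actually needed.

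Two caveats on your outline if you did intend to carry it out. First, the identity $\gamma_q(n)=2|\beta_n(\lambda_n^{\ast})|$ for a single intermediate $\lambda_n^{\ast}$ does not follow from the intermediate value theorem alone: the two endpoint equations $\lambda=\alpha_n(\lambda)\pm|\beta_n(\lambda)|$ evaluate $\alpha_n$ and $\beta_n$ at different arguments, so the difference $\alpha_n(\lambda_n^{+})-\alpha_n(\lambda_n^{-})$ also enters $\gamma_q(n)$; you need the Lipschitz bounds (with constants $o(1)$ in $n$) to absorb both discrepancies into the $h^{1+s}$ error, and this must be done uniformly on bounded sets. Second, the ``main obstacle'' you name --- placing the whole tail of the Neumann series, not just the first correction, in $h^{1+s}$ uniformly when $s=0$ --- is acknowledged but not resolved in your proposal; that is precisely the hard part of [KpMt2], so as written the proposal is an honest strategy rather than a complete proof.
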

\begin{proof}[Proof of Lemma \ref{lm_12}]
To prove asymptotic formulae \eqref{eq_30} we use \cite[Theorem 1.2]{KpMt2} and the Convolution Lemma~\ref{lm_10} (see also \cite[Appendix]{KpMt2}). Indeed, applying
\cite[Theorem 1.2]{KpMt2} with $q\in H^{s}(\mathbb{T},\mathbb{R})$, $s\in [0,\infty)$, we get
\begin{equation}\label{eq_32}
  \sum_{n\in \mathbb{N}}(1+2n)^{2(1+s)}\left(\min_{\pm}\left|\gamma_{q}(n)\pm
  2\sqrt{(\widehat{q}+\varrho)(-n)(\widehat{q}+\varrho)(n)}\right|\right)^{2}\leq
  C\left(\|q\|_{H^{s}(\mathbb{T})}\right),
\end{equation}
where
\begin{equation*}
  \varrho(n):=\frac{1}{\pi^{2}}\sum_{j\in
  \mathbb{Z}\setminus\{\pm n\}}\frac{\widehat{q}(n-j)\widehat{q}(n+j)}{(n-j)(n+j)}.
\end{equation*}
Without losing generality we assume that
\begin{equation}\label{eq_33}
 \widehat{q}(0):=0.
\end{equation}

Taking into account that the potentials $q$ are real-valued we have
\begin{equation*}
  \widehat{q}(k)=\overline{\widehat{q}(-k)},\; \varrho(k)=\overline{\varrho(-k)},\quad  k\in \mathbb{Z}.
\end{equation*}
Then from \eqref{eq_32} we get the estimates
\begin{equation}\label{eq_34}
  \left\{\gamma_{n}(q)-2\left|\widehat{q}(n)+\varrho(n)\right|\right\}_{n\in \mathbb{N}}\in h^{1+s}(\mathbb{N}).
\end{equation}

Further, as  by assumption $q\in H^{s}(\mathbb{T},\mathbb{R})$, that is $\{\widehat{q}(k)\}_{k\in \mathbb{Z}}\in h^{s}(\mathbb{Z})$, then taking into account \eqref{eq_33}
\begin{equation*}
  \left\{\frac{\widehat{q}(k)}{k}\right\}_{k\in\mathbb{Z}}\in h^{1+s}(\mathbb{Z}),\quad s\in
  [0,\infty).
\end{equation*}
 Applying the Convolution Lemma \ref{lm_10} we obtain
\begin{align}\label{eq_36}
  \varrho(n) & =\frac{1}{\pi^{2}}\sum_{j\in
  \mathbb{Z}}\frac{\widehat{q}(n-j)\widehat{q}(n+j)}{(n-j)(n+j)}=\frac{1}{\pi^{2}}\sum_{j\in
  \mathbb{Z}}\frac{\widehat{q}(2n-j)}{2n-j}\cdot\frac{\widehat{q}(j)}{j}  \\
  & =\left(\left\{\frac{\widehat{q}(k)}{k}\right\}_{k\in \mathbb{Z}}\ast\left\{\frac{\widehat{q}(k)}{k}\right\}_{k\in
  \mathbb{Z}}\right)(2n)\in h^{1+s}(\mathbb{N}). \notag
\end{align}

Finally, from \eqref{eq_34} and \eqref{eq_36} we get the necessary estimates \eqref{eq_30}.

The proof of Lemma \ref{lm_12} is complete.
\end{proof}

\begin{proof}[\textbf{Proof of Theorem \ref{th_10}}.] Let $q\in L^{2}(\mathbb{T},\mathbb{R})$ and $\omega=\{\omega(k)\}_{k\in\mathbb{N}}$ be a given weight satisfying the
conditions of Theorem \ref{th_10}:
\begin{equation}\label{eq_36.1}
  k^{s}\ll \omega(k)\ll k^{1+s},\qquad s\in [0,\infty).
\end{equation}

At first, we need to prove the statement
\begin{equation}\label{eq_37}
  q\in H^{\omega}(\mathbb{T},\mathbb{R})\Leftrightarrow \{\gamma_{q}(\cdot)\}\in  h^{\omega}(\mathbb{N}).
\end{equation}

Due to formulae \eqref{eq_36.1} the continuous embeddings
\begin{align}
 H^{1+s}(\mathbb{T}) & \hookrightarrow H^{\omega}(\mathbb{T})\hookrightarrow H^{s}(\mathbb{T}),\label{eq_38} \\
 h^{1+s}(\mathbb{N}) & \hookrightarrow h^{\omega}(\mathbb{N})\hookrightarrow h^{s}(\mathbb{N}), \hspace{20pt}
 s\in [0,\infty),\hspace{100pt} \label{eq_40}
\end{align}
are valid because
\begin{equation}\label{eq_42}
  H^{\omega_{1}}(\mathbb{T})\hookrightarrow H^{\omega_{2}}(\mathbb{T}),\quad
  h^{\omega_{1}}(\mathbb{N})\hookrightarrow h^{\omega_{2}}(\mathbb{N})\quad\text{if only}\quad \omega_{1}\gg \omega_{2}.
\end{equation}

Let $q\in H^{\omega}(\mathbb{T},\mathbb{R})$, then from \eqref{eq_38} we get $q\in H^{s}(\mathbb{T},\mathbb{R})$. Due to Lemma \ref{lm_12} we
find that
\begin{equation*}
  \gamma_{q}(n)=2|\widehat{q}(n)|+h^{1+s}(n).
\end{equation*}

Applying \eqref{eq_40} from the latter we derive
\begin{equation*}
  \gamma_{q}(n)=2|\widehat{q}(n)|+h^{\omega}(n).
\end{equation*}
And, as a consequence, we obtain that $\{\gamma_{q}(\cdot)\}\in h^{\omega}(\mathbb{N})$.

Direct implication in \eqref{eq_37} has been proved.

Let $\{\gamma_{q}(\cdot)\}\in h^{\omega}(\mathbb{N})$. Applying \eqref{eq_40} we get $\{\gamma_{q}(\cdot)\}\in h^{s}(\mathbb{N})$. Further, from
\eqref{eq_16} with $\Omega(k)=(1+2k)^{s}$, $s\in [0,\infty)$, we obtain $q\in H^{s}(\mathbb{T},\mathbb{R})$.

We have already proved the implication
\begin{equation*}
  q\in H^{s}(\mathbb{T},\mathbb{R})\Rightarrow \gamma_{q}(n)=2|\widehat{q}(n)|+h^{\omega}(n).
\end{equation*}
Hence $\{\widehat{q}(\cdot)\}\in h^{\omega}(\mathbb{N})$, i.e., $q\in H^{\omega}(\mathbb{T},\mathbb{R})$.

Inverse implication in \eqref{eq_37} has been proved.

Now we are ready to prove the statement of Theorem \ref{th_10}.

From relationship \eqref{eq_37} we get
\begin{equation}\label{eq_44}
  \gamma\left(H^{\omega}(\mathbb{T},\mathbb{R})\right) \subset h_{+}^{\omega}(\mathbb{N}).
\end{equation}
To establish the equality i) of Theorem \ref{th_10} it is necessary to prove the inverse inclusion in latter formula \eqref{eq_44}. So, let $\{\gamma(n)\}_{n\in
\mathbb{N}}$ be an arbitrary sequence from the space $h_{+}^{\omega}(\mathbb{N})$. Then $\{\gamma(n)\}_{n\in \mathbb{N}}\in h_{+}^{0}(\mathbb{N})$. Due to \cite[Theorem
1]{GrTr} potential $q\in L^{2}(\mathbb{T},\mathbb{R})$ exists for which the sequence $\{\gamma(n)\}_{n\in \mathbb{N}}\in h_{+}^{0}(\mathbb{N})$ is a corresponding
sequence of the lengths of spectral gaps. As by assumption $\{\gamma(n)\}_{n\in \mathbb{N}}\in h_{+}^{\omega}(\mathbb{N})$ due to \eqref{eq_37} we conclude that $q\in
H^{\omega}(\mathbb{T},\mathbb{R})$. I.e., the inclusion
\begin{equation}\label{eq_46}
  \gamma\left(H^{\omega}(\mathbb{T},\mathbb{R})\right) \supset h_{+}^{\omega}(\mathbb{N})
\end{equation}
holds.

Finally, the inclusions \eqref{eq_44} and \eqref{eq_46} give the necessary equality i).

Now, let prove the equality ii) of Theorem \ref{th_10}. Let $\{\gamma(n)\}_{n\in \mathbb{N}}$ be an arbitrary sequence from the space $h_{+}^{\omega}(\mathbb{N})$.
Similarly as above we prove that potential $q\in H^{\omega}(\mathbb{T},\mathbb{R})$ exists for which the sequence $\{\gamma(n)\}_{n\in \mathbb{N}}\in
h_{+}^{0}(\mathbb{N})$ is a corresponding sequence of the lengths of spectral gaps. That is
\begin{equation}\label{eq_48}
  \gamma^{-1}\left(h_{+}^{\omega}(\mathbb{N})\right) \subset H^{\omega}(\mathbb{T},\mathbb{R}).
\end{equation}

Inversely. Let $q$ be an arbitrary function from the H\"{o}rmander space $H^{\omega}(\mathbb{T},\mathbb{R})$. Then due to \eqref{eq_37} we have
$\gamma_{q}=\{\gamma_{q}(n)\}_{n\in \mathbb{N}}\in h_{+}^{\omega}(\mathbb{N})$. I.e.,
\begin{equation}\label{eq_50}
  \gamma^{-1}\left(h_{+}^{\omega}(\mathbb{N})\right) \supset H^{\omega}(\mathbb{T},\mathbb{R}).
\end{equation}

The inclusions \eqref{eq_48} and \eqref{eq_50} give the equality ii) of Theorem \ref{th_10}.

The proof of Theorem \ref{th_10} is complete.
\end{proof}
\appendix
\setcounter{equation}{0}\numberwithin{equation}{section}
\setcounter{theorem}{0}\numberwithin{theorem}{section}
\section{H\"{o}rmander spaces on the circle}
Let $\mathrm{OR}$ be a class of all measurable by Borel functions $\omega:\;(0,\infty)\rightarrow (0,\infty)$ for which real numbers $a,c>1$ exist such that
\begin{equation*}
  c^{-1}\leq \frac{\omega(\lambda t)}{\omega (t)}\leq c\qquad \forall t\geq 1,\;\lambda\in [1,a].
\end{equation*}

The space $H_{2}^{\omega}(\mathbb{R}^{n})$, $n\in \mathbb{N}$, consists of all complex-valued distributions $u\in \mathcal{S}'(\mathbb{R}^{n})$ such that their Fourier
transformations $\widehat{u}$ are locally integrable by Lebesgue on $\mathbb{R}^{n}$ and $\omega(\langle\xi\rangle)|\widehat{u}(\xi)|\in L^{2}(\mathbb{R}^{n})$ with
$\langle\xi\rangle :=(1+\xi^{2})^{1/2}$. This space is a Hilbert space with respect to the inner product
\begin{equation*}
  \left(u_{1},u_{2}\right)_{H_{2}^{\omega}(\mathbb{R}^{n})}:=\int_{\mathbb{R}^{n}}\omega^{2}(\langle\xi\rangle)\widehat{u}_{1}(\xi)\overline{\widehat{u}_{2}(\xi)}\,d\xi.
\end{equation*}

It is a particular case of the isotropic Hilbert spaces of H\"{o}rmander \cite{Hor}. If $\Omega$ is a domain in $\mathbb{R}^{n}$ with smooth boundary, then the spaces
$H_{2}^{\omega}(\Omega)$ are defined in a standard way.

Let $\Gamma$ be an infinitely smooth, closed and oriented manifold of dimension $n\geq 1$ with density $dx$ given on it. Let $\mathfrak{D}'(\Gamma)$ be a topological
vector space of distributions on $\Gamma$ which is dual to $C^{\infty}(\Gamma)$ with respect to the extension by continuity of the inner product in the space
$L^{2}(\Gamma):=L^{2}(\Gamma,dx)$.

Now, let define the H\"{o}rmander spaces on the manifold $\Gamma$. Choose a finite atlas from the $C^{\infty}$-structure on $\Gamma$ formed by the local charts
$\alpha_{j}: \mathbb{R}^{n}\leftrightarrow U_{j},\; j=1,\ldots,r$, where the open sets $U_{j}$ form a finite covering of the manifold $\Gamma$. Let functions
$\chi_{j}\in C^{\infty}(\Gamma)$, $j=1,\ldots,r$, form a partition of unity on $\Gamma$ satisfying the condition $\mathrm{supp}\,\chi_{j}\subset U_{j}$. By definition,
the linear space $H_{2}^{\omega}(\Gamma)$ consists of all distributions $f\in \mathfrak{D}'(\Gamma)$ such that $(\chi_{j}f)\circ\alpha_{j}\in
H_{2}^{\omega}(\mathbb{R}^{n})$ for every $j$, where $(\chi_{j}f)\circ\alpha_{j}$ is a representation of the distribution $\chi_{j}f$ in the local chart $\alpha_{j}$. In
the space $H_{2}^{\omega}(\Gamma)$ the inner product is defined by the formula
\begin{equation*}
  \left(f_{1},f_{2}\right)_{H_{2}^{\omega}(\Gamma)}:=\sum_{j=1}^{r}\left((\chi_{j}f_{1})\circ\alpha_{j},(\chi_{j}f_{2})\circ\alpha_{j}\right)_{H_{2}^{\omega}(\mathbb{R}^{n})},
\end{equation*}
and induces the norm $\|f\|_{H_{2}^{\omega}(\Gamma)}:=\left(f,f\right)_{H_{2}^{\omega}(\Gamma)}^{1/2}$.

There exists an alternative definition of the space $H_{2}^{\omega}(\Gamma)$ which shows that this space
 does not depend (up to equivalence of norms) on the choice
of the local charts, the partition of unity and that it is a Hilbert space.

Let a $\Psi$DO $A$ of order $m>0$ be elliptic on $\Gamma$, and let it be a positive unbounded operator on the space $L^{2}(\Gamma)$. For instance, we can set
$A:=\left(1-\triangle_{\Gamma}\right)^{1/2}$, where $\triangle_{\Gamma}$ is the Beltrami-Laplace operator on the Riemannian manifold $\Gamma$. Redefine the function $\omega\in \mathrm{OR}$ on
the interval $0<t<1$ by the equality $\omega(t):=\omega(1)$ and introduce the norm
\begin{equation}\label{eq_ap10}
  f\mapsto \|\omega(A^{1/m})f\|_{L^{2}(\Gamma)},\qquad f\in C^{\infty}(\Gamma).
\end{equation}
\begin{theorem}\label{th_ap10}
If $\omega\in \mathrm{OR}$, then the space $H_{2}^{\omega}(\Gamma)$ coincides up to equivalence of norms with the completion of the linear space $C^{\infty}(\Gamma)$ by
the norm \eqref{eq_ap10}.
\end{theorem}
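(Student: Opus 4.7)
The plan is to establish the theorem by showing that the functional-calculus operator $\omega(A^{1/m})$ is an isomorphism from $H_2^\omega(\Gamma)$ onto $L^2(\Gamma)$, with
\begin{equation*}
  \|\omega(A^{1/m})f\|_{L^2(\Gamma)}\asymp \|f\|_{H_2^\omega(\Gamma)},\qquad f\in C^\infty(\Gamma).
\end{equation*}
Since $C^\infty(\Gamma)$ is dense in the chart-based space $H_2^\omega(\Gamma)$ (via density of $C_c^\infty(\mathbb{R}^n)$ in $H_2^\omega(\mathbb{R}^n)$ for every $\omega\in\mathrm{OR}$, which is a Fourier-side Plancherel argument) and is dense in the completion defined by the norm \eqref{eq_ap10} by construction, such an equivalence identifies the two Hilbert completions as the same space up to an equivalent norm.

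First I would dispose of the power-weight case $\omega_s(t)=t^s$. By Seeley's theorem on complex powers of elliptic positive operators, $A^{s/m}$ is an elliptic classical PDO of order $s$ on $\Gamma$, and elliptic regularity gives
\begin{equation*}
  \|A^{s/m}f\|_{L^{2}(\Gamma)}\asymp \|f\|_{H^{s}(\Gamma)},\qquad s\in\mathbb{R},\ f\in C^\infty(\Gamma),
\end{equation*}
settling the theorem for all real $s$. To handle a general $\omega\in\mathrm{OR}$, I would use interpolation with a function parameter: the hypothesis $\omega\in\mathrm{OR}$ is equivalent to $\omega$ having finite lower and upper Matuszewska indices, so there exist $s_0<s_1$ with continuous embeddings $H^{s_1}(\Gamma)\hookrightarrow H_2^\omega(\Gamma)\hookrightarrow H^{s_0}(\Gamma)$, and $\omega$ generates (in the Ovchinnikov sense) an admissible function parameter $\psi$ for the Hilbert couple $(H^{s_0}(\Gamma),H^{s_1}(\Gamma))$ such that
\begin{equation*}
  H_2^\omega(\Gamma)=[H^{s_0}(\Gamma),H^{s_1}(\Gamma)]_\psi
\end{equation*}
with equivalence of norms. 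Applying the same interpolation functor to the commuting family $A^{s_j/m}\colon H^{s_j}(\Gamma)\to L^{2}(\Gamma)$ and using the functional calculus, the power-case norm equivalence transfers to $\omega(A^{1/m})\colon H_2^\omega(\Gamma)\to L^2(\Gamma)$, as required.

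The main obstacle will be justifying this interpolation step rigorously for an arbitrary $\mathrm{OR}$-weight, i.e., establishing both the identification of $H_2^\omega(\Gamma)$ with the interpolation space $[H^{s_0},H^{s_1}]_\psi$ and the transfer of the operator equivalence through the functional calculus. The former rests on the fact that $A$ simultaneously diagonalises the entire Sobolev scale on $\Gamma$ (with eigenvalue asymptotics $\lambda_k\asymp k^{m/n}$ by Weyl's law) together with the two-sided power bounds on $\omega$ imposed by the $\mathrm{OR}$-condition; the latter follows from this diagonalisation once one observes that $\omega(A^{1/m})$ acts as a Fourier multiplier in the eigenbasis of $A$. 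The redefinition of $\omega$ on $(0,1)$ by $\omega(t):=\omega(1)$ is harmless, because for the model choice $A=(1-\triangle_\Gamma)^{1/2}$ the spectrum of $A^{1/m}$ lies in $[1,\infty)$, so the modification never enters the spectral calculus.
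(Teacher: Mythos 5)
This theorem is stated in the paper without proof; the authors simply quote it from the cited works of Mikhailets and Murach \cite{MiMr1, MiMr2}, and your strategy --- settling the power weights via Seeley's theorem on complex powers of elliptic operators and then passing to a general $\omega\in\mathrm{OR}$ by interpolation with a function parameter, using the finiteness of the Matuszewska indices to trap $H_2^{\omega}$ between two Sobolev spaces --- is exactly the method of those references. Your sketch is sound in outline; the one step I would phrase more carefully is ``applying the same interpolation functor to the commuting family $A^{s_j/m}$,'' since one does not literally interpolate two different operators between the endpoints: the correct route is to identify the generating operator of the couple $\left(H^{s_0}(\Gamma),H^{s_1}(\Gamma)\right)$ with a power of $A$ via the joint eigenbasis and then read off $\omega(A^{1/m})$ as a multiplier there, which is essentially what your closing paragraph describes.
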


As the operator $A$ has a discrete spectrum, therefore the space $H_{2}^{\omega}(\Gamma)$ can be described by means of the Fourier series. Let
$\{\lambda_{k}\}_{k\in \mathbb{N}}$ be a monotonically non-decreasing, positive sequence of all eigenvalues of the operator $A$, enumerated with regard to their multiplicity. Let $\{h_{k}\}_{k\in \mathbb{N}}$ be an orthonormal basis in the space $L^{2}(\Gamma)$ formed by the correspondent eigenfunctions of the
operator $A$: $A h_{k}=\lambda_{k}h_{k}$. Then for any distribution the following expansion into the Fourier series converging in the linear space
$\mathfrak{D}'(\Gamma)$:
\begin{equation}\label{eq_ap12}
  f=\sum_{k=1}^{\infty}c_{k}(f)h_{k},\quad f\in \mathfrak{D}'(\Gamma),\; c_{k}(f):=(f,h_{k}),
\end{equation}
holds.
\begin{theorem}\label{th_ap12}
The following formulae are fulfilled:
\begin{align*}
 &H_{2}^{\omega}(\Gamma)  =\left\{f=\sum_{k=1}^{\infty}c_{k}(f)h_{k}\in \mathfrak{D}'(\Gamma)\;\left|\;\sum_{k=1}^{\infty}
  \omega^{2}(k^{1/n})|c_{k}(f)|^{2}<\infty\right.\right\}, \\
 &\|f\|_{H_{2}^{\omega}(\Gamma)}^{2}  \asymp \sum_{k=1}^{\infty}\omega^{2}(k^{1/n})|c_{k}(f)|^{2}.
\end{align*}
\end{theorem}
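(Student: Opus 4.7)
The plan is to deduce both formulae of Theorem \ref{th_ap12} from Theorem \ref{th_ap10} by diagonalising the operator $\omega(A^{1/m})$ in the orthonormal eigenbasis $\{h_k\}$ of $A$ in $L^2(\Gamma)$. Since $A h_k = \lambda_k h_k$ with $\lambda_k > 0$, the bounded Borel functional calculus gives $\omega(A^{1/m}) h_k = \omega(\lambda_k^{1/m}) h_k$, and Parseval's identity then yields
\begin{equation*}
  \|\omega(A^{1/m}) f\|_{L^2(\Gamma)}^{2} = \sum_{k=1}^{\infty} \omega^{2}(\lambda_k^{1/m})\,|c_k(f)|^{2}.
\end{equation*}
By Theorem \ref{th_ap10} the left-hand side is equivalent to $\|f\|_{H_2^{\omega}(\Gamma)}^{2}$, so the problem reduces to establishing $\omega(\lambda_k^{1/m}) \asymp \omega(k^{1/n})$ uniformly in $k \in \mathbb{N}$.

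For this I would invoke the Weyl asymptotic law for positive elliptic $\Psi$DOs of order $m$ on a closed $n$-dimensional manifold: there exist constants $0 < C_1 \leq C_2$ and $k_0 \in \mathbb{N}$ with $C_1 k^{1/n} \leq \lambda_k^{1/m} \leq C_2 k^{1/n}$ for all $k \geq k_0$. The main technical step is then to transfer this bounded ratio of arguments into a bounded ratio of values of $\omega \in \mathrm{OR}$. Iterating the defining $\mathrm{OR}$-inequality $N$ times yields $c^{-N} \leq \omega(\lambda t)/\omega(t) \leq c^{N}$ for all $\lambda \in [1, a^{N}]$ and $t \geq 1$; combined with the convention $\omega(t) = \omega(1)$ on $(0,1)$ this also controls ratios with $\lambda \in (0,1)$ by swapping the roles of $t$ and $\lambda t$. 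Choosing $N$ large enough that $a^{N} \geq \max(C_2, 1/C_1)$ and applying the resulting estimate with $t = k^{1/n} \geq 1$ gives $\omega(\lambda_k^{1/m}) \asymp \omega(k^{1/n})$ for all $k \geq k_0$; the finitely many remaining indices contribute only a bounded multiplicative constant, which does not affect the equivalence.

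Combining Parseval with this comparison produces the norm equivalence $\|f\|_{H_2^{\omega}(\Gamma)}^{2} \asymp \sum_{k=1}^{\infty}\omega^{2}(k^{1/n})|c_k(f)|^{2}$. The set-theoretic identity characterising $H_2^{\omega}(\Gamma)$ then follows automatically: if the weighted sum is finite, then $\omega(A^{1/m}) f \in L^{2}(\Gamma)$ and hence $f \in H_2^{\omega}(\Gamma)$ by Theorem \ref{th_ap10}; conversely, every $f \in H_2^{\omega}(\Gamma) \subset \mathfrak{D}'(\Gamma)$ admits the expansion \eqref{eq_ap12}, and the same norm computation forces convergence of the weighted series. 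The principal obstacle I anticipate is the careful iteration of the $\mathrm{OR}$ condition for ratios both above and below one; no deeper analytic input is required beyond the Weyl law itself.
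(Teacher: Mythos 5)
Your argument is correct, and it supplies a proof where the paper itself gives none: Theorem \ref{th_ap12} is stated in the appendix without proof, with the reader referred to \cite{MiMr1, MiMr2} for details. Your route is the natural one and matches what those references do in substance: diagonalise $\omega(A^{1/m})$ in the eigenbasis $\{h_k\}$, apply Parseval to rewrite the norm of Theorem \ref{th_ap10} as $\sum_k\omega^2(\lambda_k^{1/m})|c_k(f)|^2$, and reduce everything to the comparison $\omega(\lambda_k^{1/m})\asymp\omega(k^{1/n})$, which follows from the Weyl law $\lambda_k^{1/m}\asymp k^{1/n}$ together with a finite iteration of the defining $\mathrm{OR}$ inequality. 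Your handling of the two delicate points is sound: ratios $\lambda_k^{1/m}/k^{1/n}$ below one are controlled by exchanging the roles of $t$ and $\lambda t$ (using the convention $\omega\equiv\omega(1)$ on $(0,1)$ for the finitely many small arguments), and the finitely many indices $k<k_0$ only perturb the equivalence constants. The only step worth making explicit in a written version is the passage from the norm equivalence on $C^{\infty}(\Gamma)$ to the set-theoretic description of the completion: one should note that the completion of $C^{\infty}(\Gamma)$ in the norm \eqref{eq_ap10} is realised inside $\mathfrak{D}'(\Gamma)$ as exactly those $f$ whose Fourier partial sums form a Cauchy sequence, which is where the convergence of \eqref{eq_ap12} in $H_2^{\omega}(\Gamma)$ mentioned after the theorem comes from. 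No further analytic input is needed.
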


Note, that for every distribution $f\in H_{2}^{\omega}(\Gamma)$ series \eqref{eq_ap12} converges by the norm of the space $H_{2}^{\omega}(\Gamma)$. If values of the
function $\omega$ are separated from zero, then $H_{2}^{\omega}(\Gamma)\subseteq L^{2}(\Gamma)$, and everywhere above we may change the space $\mathfrak{D}'(\Gamma)$ by
the space $L^{2}(\Gamma)$. For more details, see \cite{MiMr1, MiMr2}.
\begin{example*}\label{ex_ap10}
Let $\Gamma=\mathbb{T}$. Then $n=1$, and we can choose $A=\left(1-d^{2}/dx^{2}\right)^{1/2}$, where $x$ defines the natural parametrization on $\mathbb{T}$. The
eigenfunctions $h_{k}=e^{i k2\pi x}$, $k\in \mathbb{Z}$, of the operator $A$ form an orthonormal basis in the space $L^{2}(\mathbb{T})$. For $\omega\in \mathrm{OR}$ we
have
\begin{equation*}
  f\in H_{2}^{\omega}(\mathbb{T})\Leftrightarrow f=\sum_{k\in \mathbb{Z}}\widehat{f}(k)e^{i k2\pi x},\quad \sum_{k\in\mathbb{Z}\setminus\{0\}}|\widehat{f}(k)|^{2}\omega^{2}(|k|)<\infty.
\end{equation*}
In this case the function $f$ is real-valued if and only if $\widehat{f}(k)=\overline{\widehat{f}(-k)}$, $k\in \mathbb{Z}$. Therefore \textit{the class $H^{\omega}$
coincides with the H\"{o}rmander space $H_{2}^{\omega}(\mathbb{T},\mathbb{R})$} with the weight function $\omega(\sqrt{1+\xi^{2}})$ if $\omega\in \mathrm{OR}$. In details
the class $\mathrm{OR}$ is described, for example, in \cite[p. 74]{BnGlTg}.
\end{example*}


\begin{thebibliography}{9999999}


\bibitem[BnGlTg]{BnGlTg}
 {N. Bingham, C. Goldie, J. Teugels},
 {Regular Variation},
 {Encyclopedia of Math. and its Appl.},
 {vol.~27},
 {Cambridge University Press},
 {Cambridge, etc.},
 {1989}.

\bibitem[DjMt]{DjMt}
 {P. Djakov, B. Mityagin},
 {\textit{Smoothness of Schr\"{o}dinger operator potential in the case of Gevrey type asymptotics of the gaps}},
 {J. Funct. Anal.}
 \textbf{195}
 {(2002)},
 {89--128}.

\bibitem[DjMt1]{DjMt1}
 {P. Djakov, B. Mityagin},
 {\textit{Spectral triangles of Schr\"{o}dinger operators with complex potentials}},
 {Selecta Math. (N.S.)}
 \textbf{9}
 {(2003)},
 {no.~4},
 {495--528}.

\bibitem[DjMt2]{DjMt2}
 {P. Djakov, B. Mityagin},
 {\textit{Instability zones of one-dimentional periodic Schr\"{o}dinger and Dirac operatots}},
 {Uspekhi Mat. Nauk}
 {\textbf{61}}
 {(2006)},
 {no.~4},
 {77--182}.
 {(Russian)};
 {English Transl. in}
 {Russian Math. Surveys}
 {\textbf{61}}
 {(2006)},
 {no.~4},
 {663--766}.



\bibitem[DjMt3]{DjMt3}
 {P. Djakov, B. Mityagin},
 {\textit{Spectral gaps of Schr\"{o}dinger operators with periodic singular potentials}},
 {Dynamics of PDE}
 {\textbf{6}}
 {(2009)},
 {no.~2},
 {95--165}.

\bibitem[GrTr]{GrTr}
 {J. Garnett, E. Trubowitz},
 {\textit{Gaps and bands of one dimensional periodic Schr\"{o}dinger operators}},
 {Comm. Math. Helv.},
 {\textbf{59}}
 {(1984)},
 {258--312}.




\bibitem[Hchs1]{Hchs1}
 {H. Hochstadt},
 {\textit{Estimates of the stability intervals for Hill's equation}},
 {Proc. Amer. Math. Soc.}
 \textbf{14}
 {(1963)},
 {930--932}.

\bibitem[Hchs2]{Hchs2}
 {H. Hochstadt},
 {\textit{On the determination of Hill's equation from its spectrum}},
 {Arch. Rat. Mech. Anal.}
 \textbf{19}
 {(1965)},
 {353--362}.

\bibitem[Hor]{Hor}
 {L. H\"{o}rmander},
 {Linear Partial Differential Operators},
 {Springer-Verlag},
 {Berlin},
 {1963}.


\bibitem[KpMt1]{KpMt1}
 {T. Kappeler, B. Mityagin},
 {\textit{Gap estimates of the spectrum of Hill's equation and actions variables for KdV}},
 {Trans. AMS}
 \textbf{351}
 {(1999)},
 {no.~2},
 {619--646}.

\bibitem[KpMt2]{KpMt2}
 {T. Kappeler, B. Mityagin},
 {\textit{Estimates for periodic and Dirichlet eigenvalues of the Schr\"{o}dinger operator}},
 {SIAM J. Math. Anal.}
 \textbf{33}
 {(2001)},
 {no.~1},
 {113--152}.

\bibitem[KpMh]{KpMh}
 {T. Kappeler, C. M\"{o}hr},
 {\textit{Estimates for periodic and Dirichlet eigenvalues of the Schr\"{o}dinger operator with singular potentials}},
 {J. Funct. Anal.}
 \textbf{186}
 {(2001)},
 {62--91}.



\bibitem[McKTr]{McKTr}
 {H. McKean, E. Trubowitz},
 \textit{Hill's operators and hyperelliptic function theory in the presence of infinitely many branch points},
 {Comm. Pure Appl. Math.}
 \textbf{29}
 {(1976)},
 {no.~2},
 {143--226}.

\bibitem[Mrch]{Mrch}
 {V. Marchenko},
 {Sturm-Liouville Operators and Applications},
 {Birkh\"{a}user Verlag},
 {Basel},
 {1986}.
 {(Russian edition: Naukova Dumka, Kiev, 1977)}

\bibitem[MrOs]{MrOs}
 {V. Marchenko, I. Ostrovskii},
 {\textit{A characterization of the spectrum of Hill's operator}},
 {Matem. Sbornik}
 {\textbf{97}}
 {(1975)},
 {no.~4},
 {540--606}.
 {(Russian)};
 {English Transl. in}
 {Math. USSR-Sb.}
 {\textbf{26}}
 {(1975)},
 {no. 4},
 {493--554}.






\bibitem[MiMl]{MiMl}
 {V. Mikhailets, V. Molyboga},
 {\textit{Spectral gaps of the one-dimensional Schr\"{o}dinger operators with singular periodic potentials}},
 {Methods Funct. Anal. Topology}
 \textbf{15}
 {(2009)},
 {no.~1},
 {31--40}.





\bibitem[Mhr]{Mhr}
 {C. M\"{o}hr},
 {Schr\"{o}dinger Operators with Singular Potentials on the Circle: Spectral Analysis and Applications},
 {Thesis at the University of Z\"{u}rich},
 {2001},
 {134 p.}

\bibitem[MiMr1]{MiMr1}
 {V. Mikhailets, A. Murach},
 {\textit{Interpolation with a function parameter and refined scale of spaces}},
 {Methods Funct. Anal. Topology}
 \textbf{14}
 {(2008)},
 {no.~1},
 {81--100}.

\bibitem[MiMr2]{MiMr2}
 {V. Mikhailets, A. Murach},
 {\textit{On the elliptic operators on a closed compact manifold}},
 {Reports of NAS of Ukraine}
 {(2009)},
 {no.~3},
 {29--35}.
 {(Russian)}


\bibitem[Psch]{Psch}
 {J. P\"{o}schel},
 {\textit{Hill's potentials in weighted Sobolev spaces and their spectral gaps}},
 {In: W. Craig (ed)},
 {Hamiltonian Systems and applications},
 {Springer},
 {2008},
 {421--430}.


\bibitem[ReSi]{ReSi}
 {M. Reed, B. Simon},
 {Methods of Modern Mathematical Physics}:
 {Vols~1-4},
 {Academic Press},
 {New York, etc.},
 {1972--1978},
 {V.~4:}
 {Analysis of Operators},
 {1972}.
 {(Russian edition: Mir, Moscow, 1982)}


\bibitem[Trb]{Trb}
 {E. Trubowitz},
 {\textit{The inverse problem for periodic potentials}},
 {Comm. Pure Appl. Math.}
 {\textbf{30}}
 {(1977)},
 {321--337}.


\end{thebibliography}
\end{document}